\newtheorem{theorem}{Theorem}
\newtheorem{lemma}{Lemma}
\newcommand{\be}{\begin{equation}}
\newcommand{\ee}{\end{equation}}
\newcommand{\wt}{\widetilde}
\newcommand{\wh}{\widehat}
\newcommand{\De}{\Delta}
\newcommand{\al}{\alpha}
\newcommand{\si}{\sigma}
\renewcommand{\th}{\theta}
\newcommand{\Q}{\mathbb{Q}}
\newcommand{\Z}{\mathbb{Z}}
\newcommand{\bi}{\bibitem}
\title{On the spectrum of critical almost Mathieu operators in the rational case}
\author{S. Jitomirskaya, L. Konstantinov, I. Krasovsky}
\date{}
\begin{document}
\maketitle
\begin{abstract} 
We derive a new Chambers-type formula and prove sharper upper bounds on the
measure of the spectrum of critical almost Mathieu operators with
rational frequencies.
\end{abstract}

\section{Introduction}
The Harper operator, a.k.a. the discrete magnetic Laplacian\footnote{The
  name ``discrete magnetic Laplacian'' was first introduced by M. Shubin in \cite{shu}.}, is
a tight-binding model of an electron confined
to a 2D square lattice in a
uniform magnetic field orthogonal to the lattice plane and with flux $2\pi\alpha$ through an elementary cell. 
It acts on
$\ell^2(\Z^2)$ and
is usually given in the Landau gauge representation
\begin{equation} \label{alpha}
(H(\alpha)\psi)_{m,n}=\psi_{m,n-1}+\psi_{m,n+1}+e^{-i2\pi\alpha
  n}\psi_{m-1,n}+e^{i2\pi\alpha n}\psi_{m+1,n},
\end{equation}
first considered by Peierls \cite{Peierls}, who noticed that it makes the Hamiltonian separable and
turns it into the direct integral in $\theta$ of operators
on $\ell^2(\mathbb{Z})$ given by:
\begin{equation}
(H_{\alpha, \theta} \varphi)(n) = \varphi(n - 1) + \varphi(n + 1) + 2 \cos 2 \pi (\alpha n + \theta) \varphi(n), \qquad \alpha, \theta \in [0, 1).
\end{equation}

In physics literature, it also appears under the names Harper's or  the Azbel-Hofstadter
model, with both names  used also for the discrete magnetic
Laplacian $H(\alpha)$.  In mathematics, it is universally called the critical almost Mathieu
operator.\footnote{This name was originally introduced by Barry Simon
  \cite{flu}.} In addition to importance in physics, this model is of 
special interest, being at the boundary of two reasonably well
understood regimes: (almost) localization and (almost) reducibility,
and not being amenable to methods of either side. Recently, there has been some
progress in the study of the fine structure of its spectrum \cite{hlqz,j,jz,Kcentral,ls}. 
 
Denote the spectrum of an operator $H, $ as a set, by $\si(H)$. An important object is the union of  $\si(H_{\alpha, \theta})$ over
$\theta,$ which coincides with the spectrum of $H(\alpha)$. We denote it $S(\alpha) :=\sigma(H(\alpha))= \cup_{\theta \in [0, 1)} \sigma(H_{\alpha, \theta})$. 
Note that by the general theory of ergodic operators, if $\al$ is irrational,
 $\si(H_{\alpha, \theta})$ is independent of $\th$. We denote the Lebesgue measure of a set $A$ by $|A|$.

For irrational $\al$, the Lebesgue measure $|S(\al)|=0$, and $S(\al)$
is a set of Hausdorff dimension no greater than $1/2$
\cite{Last94,AK,JK19}. The proof of the Hausdorff dimension result in \cite{JK19}
(which was a conjecture of D. J. Thouless)  is
based on upper bounds of the measure of the spectrum for $\alpha\in\Q$
 and a strong continuity. For rational $\al=\frac{p_0}{q_0}$, where $p_0$, $q_0$ are coprime positive integers, Last obtained the bounds \cite[Lemma 1]{Last94}:
\begin{equation} \label{eq17}
\frac{2(\sqrt{5}+1)}{q_0}<\left| S\left(\frac{p_0}{q_0}\right)\right| < \frac{8e}{q_0},
\end{equation}
where $e=\exp(1)=2.71\dots$. While the upper bound in (\ref{eq17}) was sufficient for the
argument of \cite{JK19}, the measure of the spectrum is subject to another
conjecture of Thouless \cite{T83,Tcmp}: that
 in the limit $p_n/q_n\to \al$, we have
$q_n|S(p_n/q_n)|\to c$, where $c=32C_c/\pi=9.32\dots$, 
$C_c=\sum_{k=0}^{\infty}(-1)^k(2k+1)^{-2}$ being  the Catalan constant. 
Thouless provided a partly heuristic argument in the case $p_n=1$, $q_n\to\infty$.
A rigorous proof  for
$\alpha=0$ and $p_n=1$ or $p_n=2$, $q_n$ odd, was given in \cite{hk}.

The purpose of this note is to present a sharper upper bound, for all $\alpha\in\Q$:
\begin{theorem} \label{Thm1}
For all positive coprime integers $p_0$ and $q_0$, 
\[
\left| S\left(\frac{p_0}{q_0}\right)\right| \leq \frac{4\pi}{q_0}.
\]
\end{theorem}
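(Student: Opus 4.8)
The plan is to reduce the problem, via Floquet--Bloch theory, to an estimate for a single polynomial, and then to control that polynomial by a new representation. Since $\al=p_0/q_0$ the operators $H_{\al,\th}$ are $q_0$-periodic, so by the classical Chambers relation there is a monic polynomial $\mathcal P_\al$ of degree $q_0$ --- the ``reduced discriminant'', obtained from the trace of the $q_0$-step transfer matrix by removing the oscillatory term $2\cos(2\pi q_0\th)$ --- with
\[
S\Big(\tfrac{p_0}{q_0}\Big)=\si(H(\al))=\{E\in\R:\ |\mathcal P_\al(E)|\le 4\},
\]
because $\si(H(\al))=\bigcup_{\th,k}\si\big(M_\al(\th,k)\big)$, $\det\big(E-M_\al(\th,k)\big)=\mathcal P_\al(E)-2\cos(2\pi q_0\th)-2\cos(q_0k)$, and the two cosines together sweep $[-4,4]$. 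Hence $S(\al)$ is a union of $q_0$ closed bands $[a_j,b_j]$ on each of which $\mathcal P_\al$ is monotone onto $[-4,4]$; substituting $u=\mathcal P_\al(E)$ band by band gives the co-area identity
\[
|S(\al)|=\sum_j(b_j-a_j)=\int_{-4}^{4}F_\al(u)\,du,\qquad F_\al(u):=\sum_{E:\ \mathcal P_\al(E)=u}\frac{1}{|\mathcal P_\al'(E)|},
\]
so it suffices to show $\int_{-4}^4 F_\al(u)\,du\le 4\pi/q_0$. Since $\int_{-4}^4(16-u^2)^{-1/2}\,du=\pi$, the target is, in spirit, the estimate $F_\al(u)\lesssim \tfrac{4}{q_0\sqrt{16-u^2}}$ in an integrated sense. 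Equivalently, after the substitution $E=2\sqrt2\cos\varphi$ --- legitimate since for $q_0\ge 2$ one has $S(\al)\subset[-2\sqrt2,2\sqrt2]$, the case $q_0=1$ being trivial --- one must prove
\[
|S(\al)|=\int_{\{\varphi\in[0,\pi]:\ |\widehat{\mathcal P}_\al(\varphi)|\le 4\}}2\sqrt2\,\sin\varphi\,d\varphi\ \le\ \frac{4\pi}{q_0},\qquad \widehat{\mathcal P}_\al(\varphi):=\mathcal P_\al(2\sqrt2\cos\varphi),
\]
where $\widehat{\mathcal P}_\al$ is a real, even trigonometric polynomial of degree $q_0$ whose leading term is $2^{1+q_0/2}\cos(q_0\varphi)$.

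The heart of the matter is the new Chambers-type formula: an explicit product/determinantal expression for $\mathcal P_\al$ (equivalently $\widehat{\mathcal P}_\al$), accurate enough to locate the band edges $a_j,b_j$ and to bound $|\mathcal P_\al'|$ on the spectrum, and sensitive to the arithmetic of $p_0,q_0$ (in particular to the parity of $q_0$, which governs the closing of the central gap at $E=0$). Crucially one must track the lower-order harmonics of $\widehat{\mathcal P}_\al$, not merely the leading $2^{1+q_0/2}\cos(q_0\varphi)$: these are exactly what keeps the bands polynomially --- rather than exponentially --- thin. Granting such a formula, one identifies $\{\,|\widehat{\mathcal P}_\al|\le 4\,\}$ as a union of $q_0$ short arcs, estimates the length of each and the size of $2\sqrt2\sin\varphi$ over it, sums, and via elementary trigonometric identities (e.g.\ $\sum_{j=1}^{q_0}\sin\tfrac{(2j-1)\pi}{2q_0}=(\sin\tfrac{\pi}{2q_0})^{-1}$) and Jordan's inequality $\sin x\ge\tfrac2\pi x$ sees the two powers of $q_0$ --- the number of bands against the $O(q_0^{-2})$ narrowness of each --- combine to leave precisely the constant $4\pi$; in the co-area formulation, the same input produces the required integrated bound on $F_\al$.

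The main obstacle is, unambiguously, proving the new Chambers-type formula and extracting from it the sharp per-band asymptotics. Two features make this delicate: the dependence of $\mathcal P_\al$ on $p_0,q_0$ is arithmetically intricate, so a usable closed form is not automatic; and near a band edge $\mathcal P_\al'$ can be small, so the change of variables $u=\mathcal P_\al(E)$ degenerates and one must instead invoke the local behaviour of $\mathcal P_\al$ at the levels $\pm4$ supplied by the new formula (this is also where the even-$q_0$ degeneracy at $E=0$ is handled). Once the formula and the band-edge estimates are in hand, the summation and the final optimisation yielding the value $4\pi$ should be a routine, if careful, computation.
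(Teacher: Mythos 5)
Your proposal is a plan rather than a proof: the step you yourself flag as ``the main obstacle'' --- a sufficiently explicit ``new Chambers-type formula'' for the discriminant $\mathcal P_\al$ that locates the band edges and bounds $|\mathcal P_\al'|$ from below on the spectrum --- is exactly the content that is missing, and the route you sketch toward the constant $4\pi$ is not known to go through. The reduction $|S(\al)|=\int_{-4}^4 F_\al(u)\,du$ with $F_\al(u)=\sum_{\mathcal P_\al(E)=u}|\mathcal P_\al'(E)|^{-1}$ is the classical Thouless--Last starting point; carried out rigorously it yields Last's bound $8e/q_0$, and the sharp per-band control you would need (uniform in $p_0$, tracking the lower harmonics of $\widehat{\mathcal P}_\al$) is precisely what is only known heuristically, in connection with the Thouless conjecture $q|S|\to 32C_c/\pi$. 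No explicit product formula for the discriminant of the kind you posit is available; the Chambers-type formulas actually proved in the paper (\eqref{Chambersodd}, \eqref{Chamberseven}) describe only the $(\theta,k)$-dependence of a Floquet determinant in a different gauge, not the discriminant as a function of $E$. A peripheral but genuine error: $S(\al)\not\subset[-2\sqrt2,2\sqrt2]$ in general (as $p_0/q_0\to0$ the spectrum converges to $[-4,4]$ in Hausdorff distance), so the substitution $E=2\sqrt2\cos\varphi$ is not legitimate.

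The paper's mechanism avoids all control of $\mathcal P_\al'$ and of individual band edges. One passes to the chiral gauge representation $(\widetilde H_{\al,\th}\varphi)(n)=2\sin2\pi(\al(n-1)+\th)\varphi(n-1)+2\sin2\pi(\al n+\th)\varphi(n+1)$, with $S(p_0/q_0)=\widetilde S(p/q)$ for suitable coprime $p,q$. The Floquet matrix $B_{\th,k,\ell}$ there has off-diagonal entries $b_n(\th)=2\sin2\pi(pn/q+\th)$, and its only $k$-dependence sits in the two corner entries $e^{\pm ikq}b_{\ell+q-1}(\th)$. The Chambers-type formula shows that each band is swept monotonically between $k=0$ and $k=\pi/q$ and that the full union $\widetilde S(p/q)$ is already attained at a single phase ($\th=1/(4q)$ for $q$ odd, $\th=1/(2q)$ for $q$ even). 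The total bandwidth at fixed $\th$ is then an alternating sum of differences of the ordered eigenvalues of $B_{\th,0,\ell}$ and $B_{\th,\pi/q,\ell}$, which differ by a rank-two corner perturbation with eigenvalues $\pm2|b_{\ell+q-1}(\th)|$ and $q-2$ zeros; Lidskii's inequalities bound the alternating sum by $4|b_{\ell+q-1}(\th)|$, and choosing $\ell$ and the extremal $\th$ gives $8|\sin(\pi/(2q))|\le 4\pi/q$ in the odd case, and similarly in the even case. The crucial feature your approach cannot reproduce is that in the chiral gauge the coupling in the corner can be made $O(1/q)$ by the choice of $\ell$ and $\th$, which is impossible in the standard representation where the hopping is identically $1$.
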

Thus, the upper bound is reduced from $8e=21.74\dots$ to $4\pi= 12.56\dots$.
The way we prove Theorem 1 is very different from that of \cite{Last94}; we use the chiral gauge representation 
\cite{JK19} and Lidskii's inequalities. The chiral gauge
representation of the almost Mathieu operator also leads to 
a new type of Chambers' relation (equations \eqref{Chambersodd}, \eqref{Chamberseven} below).

\section{Proof of Theorem \ref{Thm1}}
Consider the following operator on $\ell^2(\mathbb{Z})$:
\begin{multline}\label{cg}
(\widetilde{H}_{\alpha, \theta} \varphi)(n) = 2 \sin 2 \pi (\alpha (n-1) + \theta) \varphi(n - 1) + 2 \sin 2 \pi (\alpha n + \theta) \varphi(n + 1), \qquad
 \alpha, \theta \in [0, 1),
\end{multline}
and define $\widetilde{S}(\alpha) := \cup_{\theta \in [0, 1)} \sigma(\widetilde{H}_{\alpha, \theta})$.
It was shown in \cite[Theorem 3.1]{JK19} that the operators $M_{2 \alpha} := \oplus_{\theta \in [0, 1)} H_{2\alpha, \theta}$ and $\widetilde{M}_{\alpha} := \oplus_{\theta \in [0, 1)} \widetilde{H}_{\alpha, \theta}$ are unitarily equivalent, so 
that $S(\al) = \widetilde{S}(\al/2) $. (Note that $\sigma(H_{2\alpha, \theta}) \neq \sigma(\widetilde{H}_{\alpha, \theta}),$ in general.) See also related partly non-rigorous considerations in
\cite{mz,koh,wznp,kprb,kdiscr}, and an application of the rational case in \cite{Kcentral}.
Operator \eqref{cg} corresponds to the chiral gauge representation of the Harper operator.

%Further, let $\Delta^H_{\frac{p_0}{q_0}}$ denote the discriminant of $H_{\frac{p_0}{q_0}, \theta}$ (cf. \cite{Last94}). 

From now on, we always consider the case of rational $\alpha$. Furthemore, the analysis below for $q_0=1$, $q_0=2$ becomes especially elementary, and gives $|S(1)|=8$, $|S(1/2)|=4\sqrt{2}$, so that Theorem \ref{Thm1} obviously holds in these cases. From now on, we assume $q_0\ge 3$.

If $p_0$ is even, define $p:=\frac{p_0}{2}$ and $q:=q_0$ (note that
$q$ is necessarily odd in this case). %Then $p$ and $q$ are
                                %coprime. We have  $| S(p_0/q_0) | = |
                                %\widetilde{S}(p/q) |$. 
 This corresponds to case I below.
If $p_0$ is odd, define $p:=p_0$ and $q:=2q_0$. 
This corresponds to case II below.
We note that in either case $p$ and $q$ are coprime and $S(p_0/q_0)  =  \widetilde{S}(p/q) $. 

%We shall use these relations to pass from those mathematical objects corresponding to $\widetilde{H}_{\frac{p}{q}, \theta}$, to those corresponding to $H_{\frac{p_0}{q_0}, \theta}$, and vice versa.

Let $b(x) := 2 \sin (2 \pi x)$, and further identify $b_n(\theta) := b((p/q) n + \theta)$.
For the operator $\widetilde{H}_{\frac{p}{q}, \theta}$, Floquet theory
states %(see, e.g., \cite{Last92}) 
that $E \in \sigma(\widetilde{H}_{\frac{p}{q}, \theta})$ if and only if the equation $(\widetilde{H}_{\frac{p}{q}, \theta} \varphi)(n) = E \varphi(n)$ has a solution $\{ \varphi(n) \}_{n \in \mathbb{Z}}$ satisfying $\varphi(n+q) = e^{i k q} \varphi(n)$ for all $n$, and for some real $k$. Therefore, for a fixed $k$, there exist $q$ values of $E$ satisfying the eigenvalue equation
\begin{equation}
B_{\theta, k,\ell}
\begin{pmatrix}
\varphi(\ell) \\
\vdots \\
\varphi(\ell+q-1)
\end{pmatrix} = E
\begin{pmatrix}
\varphi(\ell) \\
\vdots \\
\varphi(\ell+q-1)
\end{pmatrix}
\end{equation}
for any $\ell$, where
\be \label{eq16}
B_{\theta, k,\ell} :=
\begin{pmatrix}
0 & b_{\ell} & 0 & 0 & \cdots & 0 & 0 & e^{-i k q} b_{\ell+q-1} \\
b_{\ell} & 0 & b_{\ell+1} & 0 & \cdots & 0 & 0 & 0 \\
0 & b_{\ell+1} & 0 & b_{\ell+2} & \cdots & 0 & 0 & 0 \\
\vdots & \vdots & \vdots & \vdots & \ddots & \vdots & \vdots & \vdots \\
0 & 0 & 0 & 0 & \cdots & b_{\ell+q-3} & 0 & b_{\ell+q-2} \\
e^{i k q} b_{\ell+q-1} & 0 & 0 & 0 & \cdots & 0 & b_{\ell+q-2} & 0
\end{pmatrix}.
\ee
Thus, the eigenvalues of $B_{\theta, k,\ell}$ are independent of
$\ell$. 

\subsection{Chambers-type formula}

% {\bf Remark} {\it
% Equations \eqref{Chambersodd} and \eqref{Chamberseven} are variants of
% the well-known Chambers' relation for $H_{\frac{p_0}{q_0}, \theta}$
% \cite{Chambers65} (see \cite{BellissardSimon82} for a proof):

The celebrated Chambers' formula presents the dependence of the
determinant of the almost Mathieu operator with $\alpha=p_0/q_0$
restricted to the period $q_0$ with Floquet boundary conditions, on
the phase $\theta$ and quasimomentum $k$. In the critical case it is
given by (see, e.g., \cite{Last94})
\be \label{ChambersFormula}
\det(A_{\theta, k, \ell} - E) = \Delta(E) -2 (-1)^{q_0}(\cos(2 \pi q_0 \theta) + \cos(k q_0)),
\ee
where 
\be
A_{\theta, k,\ell} :=
\begin{pmatrix}
a_{\ell} & 1 & 0 & 0 & \cdots & 0 & 0 & e^{-i k q} \\
1 & a_{\ell+1} & 1 & 0 & \cdots & 0 & 0 & 0 \\
0 & 1 & a_{\ell+2} & 1 & \cdots & 0 & 0 & 0 \\
\vdots & \vdots & \vdots & \vdots & \ddots & \vdots & \vdots & \vdots \\
0 & 0 & 0 & 0 & \cdots & 1 & a_{\ell+q-2} & 1 \\
e^{i k q} & 0 & 0 & 0 & \cdots & 0 & 1 & a_{\ell+q-1}
\end{pmatrix}, \,\,\, \ell \in \mathbb{Z},
\ee
\be
a(x) := 2 \cos(2 \pi x), \qquad a_n(\theta) := a((p_0/q_0) n + \theta),
\ee
and $\Delta$, the discriminant\footnote{In \cite{Last94}, the discriminant differs from $\Delta(E)$ by the factor $(-1)^{q_0}$.}, is independent of $\theta$ and $k$. An
immediate corollary of this formula is  that $S\left(\frac{p_0}{q_0}
\right) = \Delta^{-1}([-4, 4])$, e.g., \cite{Last94}. 

Here we obtain a formula of this type for 
$\det(B_{\theta, k,\ell} - E).$ 
Indeed, as usual, separating the terms containing $k$ in the determinant, we
obtain, 
for the characteristic polynomial $D_{\theta, k}(E) := \det(B_{\theta, k,\ell} - E):$
\begin{equation} \label{eq1}
D_{\theta, k}(E) 
= D^{(0)}_{\theta}(E)  - (-1)^q b_0 \cdots b_{q-1} \cdot 2 \cos(kq),
\end{equation}
where  $D^{(0)}_{\theta}(E) $ is  independent of $k$ and equal therefore to $D_{\theta, k = \frac{\pi}{2 q}}(E)$.

For the product of $b_j$'s we have:
\begin{lemma}\label{prod}
\begin{equation}\label{bbb}
\begin{aligned}
b_0 \cdots b_{q-1} &= \prod_{j = 0}^{q-1} 2 \sin 2\pi\left(\frac{p}{q} j + \theta\right)\\
&=4 \sin(\pi q \theta) \sin \pi q (\theta + 1/2) = 2( \cos(\pi q/2) - \cos \pi q (2\theta + 1/2) ).
\end{aligned}
\end{equation}
\end{lemma}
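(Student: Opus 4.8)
The plan is to evaluate the product $\prod_{j=0}^{q-1} 2\sin 2\pi\bigl(\tfrac{p}{q}j+\theta\bigr)$ by recognizing it as (twice to the power $q$ times) a product of sine values whose arguments, reduced modulo $1$, run over an arithmetic-like progression. Since $\gcd(p,q)=1$, as $j$ ranges over $\{0,1,\dots,q-1\}$ the residues $pj \bmod q$ are a permutation of $\{0,1,\dots,q-1\}$, so $\tfrac{p}{q}j+\theta$ modulo $1$ equals $\tfrac{m}{q}+\theta$ for each $m\in\{0,\dots,q-1\}$ (in some order). Because $\sin$ is $2\pi$-periodic we get
\[
\prod_{j=0}^{q-1} 2\sin 2\pi\Bigl(\tfrac{p}{q}j+\theta\Bigr) = \prod_{m=0}^{q-1} 2\sin 2\pi\Bigl(\tfrac{m}{q}+\theta\Bigr).
\]

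Next I would evaluate this cleaned-up product using the classical factorization of $z^q-1$ over roots of unity, or equivalently the identity $\prod_{m=0}^{q-1}(x - e^{2\pi i m/q}) = x^q - 1$. Writing $2\sin\phi = -i(e^{i\phi}-e^{-i\phi}) = -i e^{-i\phi}(e^{2i\phi}-1)$ with $\phi = 2\pi(\tfrac{m}{q}+\theta)$, one has $e^{2i\phi} = e^{4\pi i\theta} e^{4\pi i m/q}$. Setting $w := e^{4\pi i\theta}$ and noting that $e^{4\pi i m/q}$ for $m=0,\dots,q-1$ is again a permutation of the $q$-th roots of unity when $q$ is odd (since $\gcd(2,q)=1$), but a double cover of the $(q/2)$-th roots when $q$ is even — this is exactly why cases I and II must be separated — I would collect the factors to obtain $\prod_{m} (w\,\zeta_m - 1)$ for appropriate roots $\zeta_m$, which equals $\pm(w^q - 1)$ or a power-adjusted variant. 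The accumulated prefactors from the $-i e^{-i\phi}$ terms contribute $(-i)^q e^{-2\pi i q\theta} e^{-2\pi i \sum m/q}$, and $\sum_{m=0}^{q-1} m/q = (q-1)/2$, so that phase is $(-i)^q e^{-2\pi i q\theta} e^{-\pi i(q-1)} = (-i)^q (-1)^{q-1} e^{-2\pi i q\theta}$. Combining everything and simplifying $(-i)^q(-1)^{q-1}(w^q-1) = (-i)^q(-1)^{q-1}(e^{4\pi i q\theta}-1)$, then re-expressing in terms of sines, should collapse to $4\sin(\pi q\theta)\sin\pi q(\theta+1/2)$; the product-to-sum identity $2\sin X\sin Y = \cos(X-Y)-\cos(X+Y)$ with $X=\pi q\theta$, $Y=\pi q(\theta+1/2)$ then gives the final form $2(\cos(\pi q/2) - \cos\pi q(2\theta+1/2))$.

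The main obstacle I anticipate is keeping the powers of $i$, the sign $(-1)^{q}$ or $(-1)^{q-1}$, and the half-integer shifts straight through the root-of-unity manipulation, especially since $q$ has different parities in the two cases that feed into this lemma. A clean way to sidestep most of the bookkeeping is to use the known closed form $\prod_{m=0}^{q-1} 2\sin\pi\bigl(x + \tfrac{m}{q}\bigr) = 2\sin(\pi q x)$ (valid for all real $x$), applied with $x = 2\theta$: this immediately gives $\prod_{m=0}^{q-1} 2\sin 2\pi\bigl(\theta + \tfrac{m}{q}\bigr) = 2\sin(2\pi q\theta)$ when the progression has step $1/q$ — but our step is $1/q$ only after the $\gcd(p,q)=1$ reindexing above, and our shift is by the full $\tfrac{m}{q}$ rather than a sub-unit shift, so I would instead pair terms $m$ and $m + q/2$ (for even $q$) or use $\sin 2\pi(\theta+\tfrac{m}{q}) = \sin\pi(2\theta + \tfrac{2m}{q})$ and apply the identity at "level" $q$ in the variable $2\theta$ with step $2/q$. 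Tracking whether $\{2m \bmod q\}$ or $\{2m \bmod 2q\}$ is the relevant index set is precisely where the two-sine-factor answer $4\sin(\pi q\theta)\sin\pi q(\theta+1/2)$ (as opposed to a single $2\sin(2\pi q\theta)$) emerges, and I would verify the result at a couple of special values of $\theta$ (e.g.\ $\theta = 0$, where the left side vanishes and the right side gives $4\sin 0 \cdot \sin(\pi q/2) = 0$, consistent) to catch any stray sign.
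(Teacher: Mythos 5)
Your method is the same as the paper's: its entire proof is the instruction to expand the sines in exponentials and apply $1-z^{-q}=\prod_{j=0}^{q-1}(1-z^{-1}e^{2\pi i p j/q})$, which is precisely your root-of-unity factorization (your preliminary reindexing by $pj\bmod q$ is harmless and is implicit in the paper's use of $e^{2\pi i pj/q}$ rather than $e^{2\pi i j/q}$). Your odd-$q$ branch is correct once you restore the factor $e^{-2\pi i q\theta}$ that you computed in the prefactor but then omitted from the displayed ``combining everything'' expression: with it one gets $(-i)^q(e^{2\pi i q\theta}-e^{-2\pi i q\theta})=2(-1)^{(q-1)/2}\sin(2\pi q\theta)$, which equals $4\sin(\pi q\theta)\sin\pi q(\theta+1/2)$ because $\sin\pi q(\theta+1/2)=(-1)^{(q-1)/2}\cos(\pi q\theta)$ for odd $q$.

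The one genuine gap is the even-$q$ branch, which you leave as ``$\pm(w^q-1)$ or a power-adjusted variant.'' It is not of the form $\pm(w^q-1)$: since $\{e^{4\pi i m/q}\}_{m=0}^{q-1}$ runs over the $(q/2)$-th roots of unity each exactly twice, one has $\prod_m(we^{4\pi i m/q}-1)=(1-w^{q/2})^2=1-2w^{q/2}+w^q$, and carrying your prefactors through gives $4(-1)^{q/2}\sin^2(\pi q\theta)$, consistent with the claim because $\sin\pi q(\theta+1/2)=(-1)^{q/2}\sin(\pi q\theta)$ for even $q$. This branch matters: the lemma is applied with $q=2q_0$ throughout Case II. If you want to avoid the parity split altogether, apply the doubling identity $2\sin 2\pi t=2\sin\pi t\cdot 2\sin\pi(t+\tfrac12)$ to each factor after your reindexing and then use $\prod_{m=0}^{q-1}2\sin\pi\left(x+\tfrac{m}{q}\right)=2\sin(\pi q x)$ twice, at $x=\theta$ and $x=\theta+\tfrac12$; this yields $4\sin(\pi q\theta)\sin\pi q(\theta+\tfrac12)$ directly and explains where the two sine factors come from.
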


\begin{proof}
To evaluate the product of $b_j$'s, we expand sine in terms of exponentials and use the formula
$1 - z^{-q} = \prod_{j = 0}^{q-1} (1 - z^{-1} e^{2 \pi i \frac{p}{q} j})$.
 An alternative derivation can go along the lines of the proof
  of Lemma 9.6 in \cite{martini}. 
\end{proof}

Substituting (\ref{bbb}) into (\ref{eq1}), we have
\begin{equation} \label{eq2}
D_{\theta, k}(E) = D^{(0)}_{\theta}(E) - 8(-1)^q \sin(\pi q \theta) \sin \pi q (\theta + 1/2) \cos(kq).
\end{equation}

We can further obtain the dependence of $D^{(0)}_{\theta}(E)$ on $\theta$:

\begin{lemma} \label{lemma1}
$$
D^{(0)}_{\theta}(E) = \wt\Delta(E) + \left\{
\begin{array}{ll}
0, & q \mbox{ odd} \\
4(\cos(2 \pi q \theta)-1), & q \mbox{ even},
\end{array} 
\right.
$$
where the discriminant $\wt\Delta(E):=D^{(0)}_{\theta=0}(E)$ is independent of $\theta$.
\end{lemma}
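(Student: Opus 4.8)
The plan is to determine exactly which Fourier modes $e^{2\pi i m\theta}$ can occur in $D^{(0)}_\theta(E)$, by combining an upper bound on $|m|$ with the constraint that $m$ must be a multiple of $q$. Fix $k$ with $\cos(kq)=0$, say $kq=\pi/2$; then \eqref{eq1} gives $D^{(0)}_\theta(E)=\det(B_{\theta,k,\ell}-E)$, and for this $k$ the matrix $B_{\theta,k,\ell}$ is Hermitian (its two corner entries are $\mp i\,b_{\ell+q-1}$, complex conjugates of one another), so that $D^{(0)}_\theta(E)$ is real whenever $E$ and $\theta$ are real.

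For the bound on $|m|$, expand $\det(B_{\theta,k,\ell}-E)$ (take $\ell=0$) by the Leibniz rule. The pattern of nonzero entries of $B$ is, apart from the diagonal, the cycle graph $C_q$ on $q\ge3$ vertices, so the only permutations that contribute are: the identity, giving $(-E)^q$; products of disjoint transpositions along edges of $C_q$, each edge contributing a factor $-b_\cdot^2$ (including the corner transposition $(1,q)$, whose factor $e^{-ikq}b_{q-1}\cdot e^{ikq}b_{q-1}=b_{q-1}^2$ loses its phase); and the two Hamiltonian $q$-cycles, whose contributions carry a factor $e^{\pm ikq}$. Since each $b_j(\theta)$ involves only $e^{\pm2\pi i\theta}$, a monomial of total degree $d$ in the $b_j$'s has modes confined to $|m|\le d$; and a degree-$q$ monomial (every factor a $b_j$, i.e.\ the permutation a derangement) can only come from a perfect matching of $C_q$, which exists only when $q$ is even, or from a Hamiltonian cycle — and the cycle contributions cancel in $D^{(0)}_\theta=D_{\theta,k}$ since $e^{ikq}+e^{-ikq}=0$. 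Hence the modes of $D^{(0)}_\theta(E)$ satisfy $|m|\le q-1$ when $q$ is odd, and $|m|\le q$ when $q$ is even, with $m=\pm q$ arising only from the two perfect matchings of $C_q$.

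Next I would show $D^{(0)}_\theta$ is $\tfrac1q$-periodic in $\theta$, which forces $q\mid m$. Put $m_0:=p^{-1}\bmod q$; then $pm_0\equiv1\pmod q$ gives $b_{n-m_0}(\theta)=b_n(\theta-\tfrac1q)$, so the shift $(U\varphi)(n):=\varphi(n-m_0)$ conjugates $\widetilde H_{p/q,\theta}$ into $\widetilde H_{p/q,\theta-1/q}$. Since $U$ commutes with translation by $q$, it carries the Floquet solution space of $\widetilde H_{p/q,\theta}$ at quasimomentum $k$ isomorphically onto that of $\widetilde H_{p/q,\theta-1/q}$ at the same $k$, intertwining the two restricted operators; hence $B_{\theta,k,\ell}$ and $B_{\theta-1/q,k,\ell'}$ are conjugate and $D^{(0)}_\theta=D^{(0)}_{\theta-1/q}$.

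Combining the two facts finishes the proof. For $q$ odd the only surviving mode is $m=0$, so $D^{(0)}_\theta(E)\equiv D^{(0)}_0(E)=\wt\Delta(E)$. For $q$ even the surviving modes are $m=0,\pm q$, so $D^{(0)}_\theta(E)=c_0(E)+c_1(E)e^{2\pi i q\theta}+c_{-1}(E)e^{-2\pi i q\theta}$ with $c_{-1}=\overline{c_1}$ by reality; here $c_1(E)$ collects only the two perfect matchings of $C_q$, and evaluating the corresponding monomials — the phase $\exp\!\big(\tfrac{4\pi i p}{q}\sum j\big)$ accumulated from the $b_j^2$'s is trivial because $q$ is even (indeed $\sum j$ equals $q^2/4$ for one matching and $q(q-2)/4$ for the other) — one finds that each contributes exactly $1$, so $c_1=c_{-1}=2$; setting $\theta=0$ then gives $c_0=\wt\Delta(E)-4$ and $D^{(0)}_\theta(E)=\wt\Delta(E)+4(\cos2\pi q\theta-1)$. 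Conceptually I expect the $\tfrac1q$-periodicity to be the main obstacle: term by term the Leibniz expansion is not $\tfrac1q$-periodic, so the periodicity is a genuine cancellation best seen through the shift argument above; the one calculation of substance is the identity $c_1=2$, which reduces to the two elementary residue sums just noted.
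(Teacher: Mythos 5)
Your proof is correct and follows essentially the same strategy as the paper's: establish $\tfrac1q$-periodicity in $\theta$, observe that the Fourier modes of $D^{(0)}_\theta(E)$ are confined to $|m|\le q$ so that only $m=0,\pm q$ survive, and compute $c_{\pm q}$ explicitly ($0$ for $q$ odd, $2$ for $q$ even). The only differences are cosmetic: you obtain the periodicity via conjugation by the shift by $p^{-1}\bmod q$ rather than by iterating the $\ell$-independence with $b_{n+1}(\theta)=b_n(\theta+p/q)$, and you extract $c_{\pm q}$ directly from the two perfect matchings of $C_q$ rather than from the evaluation $D^{(0)}_\theta(0)$ — both yield the same computation.
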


\begin{proof}
Since $D_{\theta, k}(E)$ is independent of $\ell$, it is
$1/q$ periodic in $\theta$, i.e.,
$D_{\theta,k}(E)=D_{\theta+1/q,k}(E)$, and by \eqref{eq1} so is $D^{(0)}_{\theta}(E)$.
Therefore, since, clearly,  $D^{(0)}_{\theta}(E)=\sum_{n=-q}^q c_n (E)
e^{2\pi i \theta n} $, the terms $c_k$ other than $k=mq$ vanish, and $D^{(0)}_{\theta}(E)$ has the following Fourier expansion:
$$D^{(0)}_{\theta}(E) = c_0(E)+c_q e^{2\pi i q \theta}+c_{-q}e^{-2\pi i q \theta}.$$

It is easily seen that the $c_q$ and $c_{-q}$ can be obtained from the expansion of
the determinant and that, moreover,
they do not depend on $E. $ Expanding $D^{(0)}_{\theta}(E)$ with $E=0$ in rows and columns (cf. \cite{Kcentral}),
we obtain
\begin{equation}
D^{(0)}_{\theta}(0) = D_{\theta, k=\frac{\pi}{2q}}(0)=
\left\{
\begin{array}{ll}
0, & q \mbox{ odd} \\
(-1)^{q/2} (b_0^2 b_2^2 \cdots b_{q-2}^2 + b_1^2 b_3^2 \cdots b_{q-1}^2), & q \mbox{ even}.
\end{array} 
\right.
\end{equation}

This gives $c_q=c_{-q}=0$ for $q$ odd, and $c_q=\prod_{j=0}^{\frac{q-2}2} e^{8\pi i \frac{p}{q}j}+\prod_{j=0}^{\frac{q-2}2} e^{4\pi i \frac{p}{q}(2j+1)}=2=c_{-q},$ for $q$ even. It remains to denote 
$\wt\De(E)=c_0(E)$ for $q$ odd, and $\wt\De(E)=c_0(E)+4$ for $q$ even, and the proof is complete.
\end{proof}

We therefore have, by   (\ref{eq2}) and Lemma \ref{lemma1}:

\begin{lemma}[Chambers-type formula]
\begin{equation}\label{Chambersodd}
D_{\theta, k}(E)= \widetilde\Delta(E) + 4(-1)^{(q-1)/2}  \sin(2 \pi q \theta) \cos(k q),\qquad q\,\mbox{ odd}.
\end{equation}
\begin{equation} \label{Chamberseven}
D_{\theta, k}(E) = \wt\Delta(E) - 4(1 - \cos(2 \pi q \theta))(1 + (-1)^{q/2}\cos(k q)),\qquad q\,\mbox{ even}.
\end{equation}
\end{lemma}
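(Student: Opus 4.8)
The plan is to obtain \eqref{Chambersodd} and \eqref{Chamberseven} by simply substituting the formula for $D^{(0)}_\theta(E)$ from Lemma \ref{lemma1} into the identity \eqref{eq2} and then reducing the trigonometric product $\sin(\pi q\theta)\sin\pi q(\theta+1/2)$ according to the parity of $q$. All the genuine content has already been extracted: Lemma \ref{prod} supplies the product $b_0\cdots b_{q-1}$, which produces the $\cos(kq)$-coefficient in \eqref{eq2}, and Lemma \ref{lemma1} identifies the $\theta$-dependence of the $k$-independent part $D^{(0)}_\theta(E)$, so what remains is elementary.

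First I would handle the odd case. Here $(-1)^q=-1$ and the correction term in Lemma \ref{lemma1} vanishes, so \eqref{eq2} reads $D_{\theta,k}(E)=\wt\Delta(E)+8\sin(\pi q\theta)\sin\pi q(\theta+1/2)\cos(kq)$. Writing $q=2m+1$ with $m=(q-1)/2$ gives $\sin\pi q(\theta+1/2)=\sin(\pi q\theta+\pi m+\pi/2)=(-1)^{(q-1)/2}\cos(\pi q\theta)$; combined with $2\sin(\pi q\theta)\cos(\pi q\theta)=\sin(2\pi q\theta)$ this collapses the last term to $4(-1)^{(q-1)/2}\sin(2\pi q\theta)\cos(kq)$, which is \eqref{Chambersodd}. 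For the even case, $(-1)^q=1$ and $D^{(0)}_\theta(E)=\wt\Delta(E)-4(1-\cos(2\pi q\theta))$; writing $q=2m$ with $m=q/2$ gives $\sin\pi q(\theta+1/2)=(-1)^{q/2}\sin(\pi q\theta)$, so the $\cos(kq)$ term in \eqref{eq2} becomes $-8(-1)^{q/2}\sin^2(\pi q\theta)\cos(kq)=-4(-1)^{q/2}(1-\cos(2\pi q\theta))\cos(kq)$. Adding this to $-4(1-\cos(2\pi q\theta))$ and factoring out $-4(1-\cos(2\pi q\theta))$ yields \eqref{Chamberseven}.

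There is essentially no obstacle here. The only point requiring care is consistent bookkeeping of the three signs $(-1)^q$, $(-1)^{(q-1)/2}$, $(-1)^{q/2}$ as they are fed through the shift identity for $\sin(x+\pi q/2)$; once Lemmas \ref{prod} and \ref{lemma1} are in place, the Chambers-type formula is a direct computation, and no additional input — in particular no further analysis of the determinant structure — is needed.
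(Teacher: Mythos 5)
Your proposal is correct and follows exactly the route the paper takes: the paper derives the lemma directly from \eqref{eq2} together with Lemma \ref{lemma1}, leaving the trigonometric reduction of $\sin(\pi q\theta)\sin\pi q(\theta+1/2)$ implicit, and your sign bookkeeping via $\sin(x+\pi q/2)$ reproduces both formulas accurately. Nothing further is needed.
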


Note that $\widetilde\Delta(E)$ is a polynomial of degree $q$ independent of $k\in\mathbb R$ and $\theta\in[0,1)$.
By Floquet theory, the spectrum $\sigma(\widetilde{H}_{\frac{p}{q}, \theta})$ is the union of the eigenvalues of
$B_{\th,k, \ell}$ over $k$, a collection of $q$ intervals. 

We make the following observations.

\bigskip

\underline{Case I: q is odd.}

By \eqref{Chambersodd}, 
$D_{\theta, k}(E)\equiv \det(B_{\theta,k,\ell}-E) = 0$ if and only if
$\wt\Delta(E) = 4 (-1)^{(q+1)/2} \sin(2 \pi q \theta) \cos(k q)$. 
Thus, $\sigma(\widetilde{H}_{\frac{p}{q}, \theta})$ is the preimage of $[-4|\sin(2 \pi q \theta)|, 4|\sin(2 \pi q \theta)|]$
under the mapping $\wt\Delta(E)$.  If $\th=m/(2q)$, $m\in\mathbb Z$, $\sigma(\widetilde{H}_{\frac{p}{q}, \frac{m}{2q}})$
is a collection of $q$ points where $\wt\Delta(E)=0$. (In this case, $b_0(m/(2q))=0$,
so that $\wt H$ splits into the direct sum of an infinite number of copies of a $q$-dimensional matrix.) 
We note that the spectra $\sigma(\widetilde{H}_{\frac{p}{q}, \theta})$ for
different $\theta$ are nested in one another as $\theta$ grows from $0$ to $1/(4q)$; in particular,
for each $\theta \in [0, 1)$,
\begin{equation} \label{eq5}
\sigma(\widetilde{H}_{\frac{p}{q}, \theta})=\wt\Delta^{-1}([-4|\sin(2 \pi q \theta)|, 4|\sin(2 \pi q \theta)|]) 
 \subseteq \sigma(\widetilde{H}_{\frac{p}{q}, \theta = \frac{1}{4 q}})=\wt\Delta^{-1}([-4, 4]).
\end{equation}
This implies that all the maxima of $\wt\Delta(E)$ are no less than 4, and all the minima are no greater than $-4$.
Moreover, taking the union over all $\theta \in [0, 1)$ gives:
\begin{equation}
\widetilde{S}\left(\frac{p}{q} \right) =\sigma(\widetilde{H}_{\frac{p}{q}, \theta = \frac{1}{4 q}})=\wt\Delta^{-1}([-4, 4]).
\end{equation}
Clearly, it is sufficient to consider only $\th\in [0,1/(4q)]$.

\bigskip

\underline{Case II: q is even.}
This case is similar to case I, so we omit some details for brevity.
By \eqref{Chamberseven}, 
$D_{\theta, k}(E) = 0$ if and only if
$\wt\Delta(E) = 4(1 - \cos(2 \pi q \theta))(1 + (-1)^{q/2}\cos(k q))$.
Considering the cases $k=0, \frac{\pi}{q}$, it is easy to see that $\sigma(\widetilde{H}_{\frac{p}{q}, \theta})$ is the 
preimage of $[0, 8 - 8 \cos(2 \pi q \theta)]$
under the mapping $\wt\Delta(E)$.  If $\th=m/q$, $m\in\mathbb Z$, $\sigma(\widetilde{H}_{\frac{p}{q}, \frac{m}{q}})$
is a collection of $q$ points where $\wt\Delta(E)=0$.
We note that the spectra $\sigma(\widetilde{H}_{\frac{p}{q}, \theta})$ for
different $\theta$ are nested in one another as $\theta$ grows from $0$ to $1/(2q)$; in particular,
for each $\theta \in [0, 1)$,
\begin{equation} \label{eq14}
\sigma(\widetilde{H}_{\frac{p}{q}, \theta})=\wt\Delta^{-1}([0, 8 - 8 \cos(2 \pi q \theta)]) 
 \subseteq \sigma(\widetilde{H}_{\frac{p}{q}, \theta = \frac{1}{2 q}})=\wt\Delta^{-1}([0, 16]).
\end{equation}
This implies that all the maxima of $\wt\Delta(E)$ are no less than 16, and all the minima are no greater than 0.
Moreover, taking the union over all $\theta \in [0, 1)$ gives:
\begin{equation}
\widetilde{S}\left(\frac{p}{q} \right) =\sigma(\widetilde{H}_{\frac{p}{q}, \theta = \frac{1}{2 q}})=\wt\Delta^{-1}([0, 16]).
\end{equation}
Clearly, it is sufficient to consider only $\th\in [0,1/(2q)]$. 

In this case of even $q$ we can say more about the form of $\wt\Delta(E)$. Note that $b_0(0)=b_{q/2}(0)=0$
and $b_k(0)=b_{-k}(0)$. Recall that by Floquet theory, $D_{\theta,k}(E)=\det(B_{\theta,k,\ell}-E)$ is independent
of the choice of $\ell$. For convenience, choose $\ell=-q/2+1$. It is easily seen that $B_{\theta=0,k,\ell=-q/2+1}$ 
decomposes into a direct sum,
and moreover $\wt\Delta(E)=D_{\theta=0,k}(E)=(-1)^{q/2}P_{q/2}(-E)P_{q/2}(E)$, where $P_{q/2}(E)$ is a polynomial 
of degree $q/2$, odd if $q/2$ is odd, and even if $q/2$ is even (as it is a characteristic polynomial of a tridiagonal matrix with zero main diagonal). Thus $\wt\Delta(E)=P_{q/2}(E)^2$ is a square. 

\bigskip

The discriminants $\wt\Delta(E)\equiv \wt\Delta_{p/q}(E)$ and $\Delta(E)\equiv \Delta_{p_0/q_0}(E)$ are related in the following way:
\begin{lemma}
For $q$ odd,
\begin{equation}
\wt\Delta_{p/q}(E)=\Delta_{p_0/q_0}(E),\qquad p_0=2p,\quad q_0=q.
\end{equation}
For $q$ even,
\begin{equation}
\wt\Delta_{p/q}(E)=\Delta_{p_0/q_0}^2(E),\qquad p_0=p,\quad q_0=q/2.
\end{equation}
\end{lemma}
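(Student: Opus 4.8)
The plan is to read the two identities off from facts already in hand, exploiting the rigidity of discriminant‑type polynomials: such a polynomial is determined, up to its leading coefficient, by the compact set it cuts out, provided that set has the expected number of bands. The inputs are the classical Chambers corollary $S(p_0/q_0)=\Delta_{p_0/q_0}^{-1}([-4,4])$; the descriptions $\widetilde S(p/q)=\widetilde\Delta_{p/q}^{-1}([-4,4])$ for $q$ odd and $\widetilde S(p/q)=\widetilde\Delta_{p/q}^{-1}([0,16])$ with $\widetilde\Delta_{p/q}=P_{q/2}^2$ for $q$ even; and the equality $S(p_0/q_0)=\widetilde S(p/q)$ observed above.

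Take $q$ odd, so that $p_0=2p$, $q_0=q$, and $q_0$ is odd. The inputs give $\widetilde\Delta_{p/q}^{-1}([-4,4])=\Delta_{p_0/q_0}^{-1}([-4,4])=:K$, where both polynomials have degree $q$ and leading coefficient $(-1)^q$ (each is the characteristic polynomial of a $q\times q$ matrix up to a term independent of $E$). Since $q_0$ is odd, $S(p_0/q_0)=K$ has exactly $q_0=q$ bands, all of positive length — the maximal number for a degree‑$q$ polynomial — and it is then standard that $\widetilde\Delta_{p/q}^2-16$ and $\Delta_{p_0/q_0}^2-16$ each have precisely the $2q$ endpoints of $K$ as their roots, all simple. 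Being polynomials of degree $2q$ with the same roots with multiplicity and the same leading coefficient $1$, they are equal, so $\widetilde\Delta_{p/q}^2=\Delta_{p_0/q_0}^2$; comparing leading coefficients $(-1)^q$ gives $\widetilde\Delta_{p/q}=\Delta_{p_0/q_0}$.

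For $q$ even, so $p_0=p$ and $q_0=q/2$, I apply the same scheme to $P_{q/2}$. Since $\widetilde\Delta_{p/q}=P_{q/2}^2\ge 0$, we have $P_{q/2}^{-1}([-4,4])=\widetilde\Delta_{p/q}^{-1}([0,16])=\Delta_{p_0/q_0}^{-1}([-4,4])=:K$, with $P_{q/2}$ and $\Delta_{p_0/q_0}$ both of degree $q_0$ and leading coefficient $(-1)^{q_0}$. If $q_0$ is odd, $K$ has exactly $q_0$ bands and the argument of the preceding paragraph yields $P_{q/2}^2=\Delta_{p_0/q_0}^2$. If $q_0$ is even, $K$ has $q_0-1$ bands, its two central bands meeting at $E=0$; since $P_{q/2}$ and $\Delta_{p_0/q_0}$ are even polynomials, this forces a common double zero of $P_{q/2}^2-16$ and of $\Delta_{p_0/q_0}^2-16$ at $E=0$ and simple zeros at the remaining $2(q_0-1)$ endpoints of $K$, so these two degree‑$2q_0$ polynomials again have the same roots with multiplicity and leading coefficient $1$, whence $P_{q/2}^2=\Delta_{p_0/q_0}^2$. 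In either subcase $\widetilde\Delta_{p/q}=P_{q/2}^2=\Delta_{p_0/q_0}^2$.

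The only point that is not mere bookkeeping is the band‑count input behind the rigidity step: that $S(p_0/q_0)$ consists of exactly $q_0$ bands when $q_0$ is odd and of exactly $q_0-1$ bands, with the sole touching at $E=0$, when $q_0$ is even — equivalently, that $\Delta_{p_0/q_0}^2-16$ has only simple roots for $q_0$ odd and a double root at $0$ with all other roots simple for $q_0$ even. This is classical for the almost Mathieu operator at rational frequency; I would either cite it or record it as a short auxiliary lemma, after which the remainder is the leading‑coefficient comparison above.
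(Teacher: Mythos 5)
Your proposal is correct and follows essentially the same route as the paper: both arguments identify $\widetilde S(p/q)$ with $S(p_0/q_0)$, use the Choi--Elliott--Yui band count to supply enough distinct band edges where the discriminants take the values $\pm 4$ (or $16$ for the squares), and conclude by comparing degrees and leading coefficients. Your variant of first squaring to avoid matching signs at the edges, and of handling the closed central gap for $q_0$ even via a double root of $\Delta^2-16$ at $E=0$, is only a minor difference in bookkeeping from the paper's direct point-counting.
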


\begin{proof}

\underline{Case I: q is odd.}
Here, by our definitions at the start of the section, $p_0 = 2 p$ and $q_0 = q$. $\wt\Delta_{p/q}(E)$ and 
$\Delta_{p_0/q_0}(E)$ are polynomials in $E$ of degree $q$ with the same coefficient $-1$ of $E^q$.
Since $\wt\Delta(E) = \Delta(E) = \pm 4$ at the $2q \geq q+1$ distinct edges of the bands (cf. \cite[3.3]{Choietal90}), these polynomials coincide: $\wt\Delta(E) = \Delta(E)$ for each $E$.

\underline{Case II: q is even.}
Here, $p_0 = p$ and $q_0 = q/2$. $\wt S\left( \frac{p}{q} \right) =
S\left( \frac{p_0}{q_0} \right)$ is the preimage of $[0, 16]$ under
$\wt\Delta_{p/q}$ and of $[-4, 4]$ under $\Delta_{p_0/q_0}$, hence
also of $[0, 16]$ under $\Delta_{p_0/q_0}^2$. On the other hand, we have seen above that $\wt\Delta(E)=P_{q/2}^2(E)$
for some polynomial $P_{q/2}(E)$ of degree $q/2=q_0$.
Thus, $P_{q/2}^2(E)$
and $\Delta^2(E)$ coincide at the $2q_0 \geq q_0+1$ (for $q_0$ odd) and 
$2q_0-1 \geq q_0+1$ (for $q_0$ even) distinct edges of
the bands (cf. \cite[3.3]{Choietal90}; the central bands merge for $q_0$ even), so these polynomials of degree $q$ are equal: $\wt\Delta(E) = \Delta^2(E)$ for each $E$.
\end{proof}

\subsection{Measure of the spectrum}
The rest of the proof follows the argument of \cite{Beckeretal19}, namely it
uses  Lidskii's inequalities to bound
$|\widetilde{S}(\frac{p}{q})|$. The key observation is that choosing $\ell$ appropriately, we can make the corner elements of the matrix $B_{\th,k,\ell}$ very small, of order $1/q$ when $q$ is large. This is not possible to do in the standard representation for the almost Mathieu operator.
 Here are the details.

\bigskip

\underline{Case I: q is odd.}
Assume without loss that $(-1)^{(q+1)/2}>0$,  $\th\in (0,1/(4q)]$. (If $(-1)^{(q+1)/2}<0$, the analysis is similar.)
Then the eigenvalues  $\{ \lambda_i(\theta) \}_{i=1}^q$ of $B_{\th,k=0,\ell}$ labelled in decreasing order
are the edges of the spectral bands where $\wt\Delta(E)$ reaches its maximum $4 \sin(2 \pi q \theta)$  on the band;
and  the eigenvalues  $\{ \wh\lambda_i(\theta) \}_{i=1}^q$ of $B_{\th,k=\pi/q,\ell}$ labelled in decreasing order
are the edges of the spectral bands where $\wt\Delta(E)$ reaches its minimum $-4 \sin(2 \pi q \theta)$ on the band.
Then
\begin{equation} \label{eq4}
\begin{split}
|\sigma(\widetilde{H}_{\frac{p}{q}, \theta})| &= \sum_{j=1}^q (-1)^{q-j} (\wh\lambda_j(\theta) - \lambda_j(\theta))
=\sum_{j=1}^{(q+1)/2} (\wh\lambda_{2j-1}(\theta) - \lambda_{2j-1}(\theta))+
\sum_{j=1}^{(q-1)/2} (\lambda_{2j}(\theta) - \wh\lambda_{2j}(\theta))
;\\
&\wh\lambda_j(\theta) - \lambda_j(\theta)>0,\quad\mbox{if $j$ is odd} ;\qquad 
 \wh\lambda_j(\theta) - \lambda_j(\theta)<0,\quad\mbox{if $j$ is even.}
\end{split}
\end{equation}

Now we view $B_{\th,k=\pi/q,\ell}$ as $B_{\th,k=0,\ell}$ with the added perturbation
$$
B_{\th,k=\pi/q,\ell}-B_{\th,k=0,\ell}=
\begin{pmatrix}
& & -2b_{\ell+q-1} \\
& & \\
-2b_{\ell+q-1} & &
\end{pmatrix},
$$
which has the eigenvalues $\{ E_i(\theta) \}_{i=1}^q$ given by:
$$E_q(\theta) = -2|b_{\ell+q-1}(\theta)| < 0 = E_{q-1}(\theta) = \cdots = E_{2}(\theta) = 0 < 2|b_{\ell+q-1}(\theta)| = E_1(\theta).$$

The Lidskii inequalities (e.g., (2.51) in \cite{Beckeretal19}) are:
\begin{theorem} \label{thm1}
For any $q \times q$ self-adjoint matrix $M$, we denote is eigenvalues by $E_1(M) \geq E_2(M) \geq \cdots \geq E_q(M)$. For $q \times q$ self-adjoint matrices $A$ and $B$, we have:
$$
\begin{array}{l}
E_{i_1}(A+B) + \cdots + E_{i_m}(A+B) \leq E_{i_1}(A) + \cdots + E_{i_m}(A) + E_1(B) + \cdots + E_m(B); \\
E_{i_1}(A+B) + \cdots + E_{i_m}(A+B) \geq E_{i_1}(A) + \cdots + E_{i_m}(A) + E_{q-m+1}(B) + \cdots + E_q(B) ,
\end{array} 
$$
for any $1 \leq i_1 < \cdots < i_m \leq q$. 
\end{theorem}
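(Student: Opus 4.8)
\emph{The plan} is to deduce both families of inequalities from \emph{Wielandt's minimax principle} for partial sums of eigenvalues, combined with the elementary \emph{Ky Fan bound}. Recall Wielandt's identity: for a self-adjoint $q\times q$ matrix $M$ and indices $1\le i_1<\cdots<i_m\le q$,
\[
\sum_{j=1}^m E_{i_j}(M)=\max_{S_1\subset\cdots\subset S_m}\ \min_{x_j\in S_j}\ \sum_{j=1}^m\langle x_j,Mx_j\rangle,
\]
where the outer maximum runs over all chains of subspaces with $\dim S_j=i_j$ and the inner minimum runs over orthonormal systems $x_1,\dots,x_m$ with $x_j\in S_j$. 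I will also use the Ky Fan bound: for \emph{any} orthonormal system $x_1,\dots,x_m$ one has $\sum_{j=1}^m\langle x_j,Mx_j\rangle\le\sum_{j=1}^m E_j(M)$, which follows by completing $\{x_j\}$ to an orthonormal basis and noting that the diagonal of $M$ in that basis is majorized by the spectrum of $M$ (Schur--Horn).

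\emph{Upper inequality.} First I would fix a chain $S_1^\star\subset\cdots\subset S_m^\star$ (with $\dim S_j^\star=i_j$) attaining the maximum in Wielandt's identity for $A+B$, and inside it choose an orthonormal system $x_j^\star\in S_j^\star$ minimizing $\sum_j\langle x_j,Ax_j\rangle$. Evaluating the $(A+B)$-form at this particular system and splitting by linearity,
\[
\sum_{j=1}^m E_{i_j}(A+B)=\min_{x_j\in S_j^\star}\sum_{j=1}^m\langle x_j,(A+B)x_j\rangle\le\sum_{j=1}^m\langle x_j^\star,Ax_j^\star\rangle+\sum_{j=1}^m\langle x_j^\star,Bx_j^\star\rangle.
\]
Here the first sum on the right equals $\min_{x_j\in S_j^\star}\sum_j\langle x_j,Ax_j\rangle$, which is at most the Wielandt maximum for $A$, namely $\sum_{j=1}^m E_{i_j}(A)$; the second sum is at most $\sum_{j=1}^m E_j(B)$ by the Ky Fan bound. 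Combining these gives exactly the first asserted inequality.

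\emph{Lower inequality.} I would obtain this by applying the upper inequality already proved to the pair $(A+B,\,-B)$, whose sum is $A$:
\[
\sum_{j=1}^m E_{i_j}(A)\le\sum_{j=1}^m E_{i_j}(A+B)+\sum_{j=1}^m E_j(-B).
\]
Since $E_j(-B)=-E_{q-j+1}(B)$, rearranging yields $\sum_j E_{i_j}(A+B)\ge\sum_j E_{i_j}(A)+\sum_{j=1}^m E_{q-j+1}(B)$, which is the second asserted inequality.

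\emph{Main obstacle.} The routine steps are the splitting and the Ky Fan estimate; the substantive input is Wielandt's identity, and within it the nontrivial ``$\ge$'' direction (that some chain actually attains the eigenvalue sum). I would establish this by the standard dimension-counting argument: take the chain built from eigenvectors of $M$, and for any competing orthonormal system intersect each $S_j$ with a suitable span of eigenvectors so that, by inequalities on $\dim$, one is forced to find test vectors bounding $\sum_j\langle x_j,Mx_j\rangle$ below by $\sum_j E_{i_j}(M)$. The ``$\le$'' direction and the Ky Fan bound follow from the same majorization. Everything here is finite-dimensional, so no compactness or continuity subtleties arise.
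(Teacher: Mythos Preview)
Your argument is correct and follows the standard route to Lidskii's inequalities via Wielandt's minimax principle together with the Ky Fan/Schur--Horn majorization bound. The key step---choosing the optimal chain for $A+B$ and then, \emph{within that fixed chain}, choosing the orthonormal system that minimizes the $A$-form---is exactly what makes the splitting work, and you handle it cleanly. The reduction of the lower inequality to the upper one via $(A+B,-B)$ is also the right move.

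As for comparison with the paper: there is nothing to compare. The paper does not prove this statement; it merely quotes the Lidskii inequalities as a known tool, with a pointer to (2.51) in \cite{Beckeretal19}. So you have supplied a proof where the paper simply invokes the result. What you gain is self-containment; what the paper gains is brevity, since these inequalities are classical (Lidskii, 1950) and available in standard references such as Bhatia's \emph{Matrix Analysis}. Your sketch of the ``$\ge$'' direction of Wielandt's identity via the eigenvector chain and dimension counting is the customary one and would go through without difficulty in finite dimensions.
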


Applying these inequalities with $A=B_{\th,k=0,\ell}$, $B=B_{\th,k=\pi/q,\ell}-B_{\th,k=0,\ell}$ gives:
\[
\begin{aligned}
\sum_{j=1}^{(q+1)/2} (\wh\lambda_{2j-1}(\theta) - \lambda_{2j-1}(\theta))&\le \sum_{j=1}^{(q+1)/2} E_j(\theta)=E_1(\th);\\
\sum_{j=1}^{(q-1)/2} (\lambda_{2j}(\theta) - \wh\lambda_{2j}(\theta))&\le -\sum_{j=(q-1)/2}^q E_j(\theta)=-E_q(\th).
\end{aligned}
\]
Substituting these into (\ref{eq4}), we obtain:
\begin{equation} \label{eq6}
|\sigma(\widetilde{H}_{\frac{p}{q}, \theta})| \leq E_1(\theta) - E_q(\theta) = 4|b_{\ell+q-1}(\theta)|.
\end{equation}
Moreover, by the invariance of $D_{\theta, k}(E)$ under the mapping $b_n \mapsto b_{n+m}$, for $n=0, 1, \ldots, q-1$ and any $m$, we can choose any $\ell$ in (\ref{eq6}), so that
\begin{equation} \label{eq7}
|\sigma(\widetilde{H}_{\frac{p}{q}, \theta})| \leq 4\min_{\ell}|b_{\ell+q-1}(\theta)|.
\end{equation}

In particular,
\begin{equation} \label{eq8}
\left| \widetilde{S}\left(\frac{p}{q}\right) \right|=|\sigma(\widetilde{H}_{\frac{p}{q}, \theta = \frac{1}{4 q}})|
\leq 4\min_{\ell}\left|b_{\ell+q-1}\left(\frac{1}{4q}\right)\right|=4\cdot 2 \left| \sin2 \pi \left(\frac{1}{4 q}\right)\right| \leq \frac{4\pi}{q}.
\end{equation}
Therefore, $| S(\frac{p_0}{q_0}) | = | \wt S(\frac{p}{q}) | \leq \frac{4\pi}{q} = \frac{4\pi}{q_0}$, as required.

\vspace{0.3cm}

\underline{Case II: q is even.} This case is similar to case I, so we omit some details for brevity.
This time, the Lidskii equations of Theorem \ref{thm1} show that $|\widetilde{S}(\frac{p}{q})| \leq \frac{8 \pi}{q}$. Indeed, as in (\ref{eq7}), we have (note the doubling of the eigenvalues for $\wt\Delta(E)=0$)
\begin{equation} \label{eq12}
|\sigma(\widetilde{H}_{\frac{p}{q}, \theta})| \leq 4\min_{\ell}|b_{\ell+q-1}(\theta)|.
\end{equation}

In particular,
\begin{equation} \label{eq15}
\left| \widetilde{S}\left(\frac{p}{q}\right) \right|=|\sigma(\widetilde{H}_{\frac{p}{q}, \theta = \frac{1}{2 q}})|
\leq 4\min_{\ell}\left|b_{\ell+q-1}\left(\frac{1}{2q}\right)\right|=4\cdot 2 \left| \sin2 \pi \left(\frac{1}{2 q}\right)\right| \leq \frac{8\pi}{q}.
\end{equation}
Therefore, $| S(\frac{p_0}{q_0}) | = | \wt S(\frac{p}{q}) | \leq \frac{8\pi}{q} = \frac{4\pi}{q_0}$, as required.

This completes the proof of Theorem \ref{Thm1}.

\vspace{0.3cm}

\section*{Acknowledgment}
   The work of S.J. was
 supported by NSF DMS-1901462. The work of I.K. was supported by  the Leverhulme Trust
research programme grant RPG-2018-260.

\end{document}